\newtheorem{prop}{Proposition}
\newtheorem{remark}{Remark}
\newtheorem{lem}{Lemma}
\def\f{\frac}
\def\i1n{i=1,\cdots,n}
\def\j1n{j=1,\cdots,n}
\def\ij1n{i,j=1,\cdots,n}
\newcommand{\be}{\begin{equation}}
\newcommand{\ee}{\end{equation}}
\newcommand{\beq}{\begin{equation*}}
\newcommand{\eeq}{\end{equation*}}
\newtheorem{thm}{Theorem}
\title{\LARGE \bf
Backstepping Control of the One-Phase Stefan Problem
}
\author{Shumon Koga, Mamadou Diagne, Shuxia Tang, and Miroslav Krstic% <-this % stops a space
\thanks{Shumon Koga, Shuxia Tang and Miroslav Krstic are with the Department of Mechanical and Aerospace Engineering, U.C. San Diego, 9500 Gilman Drive, La Jolla, CA, 92093-0411, {\tt\small skoga@ucsd.edu}, {\tt\small mdiagne@ucsd.edu}, {\tt\small sht015@ucsd.edu}, and {\tt\small krstic@ucsd.edu}}
\thanks{Mamadou Diagne is with Mechanical Engineering Department of the University of Michigan Ann Arbor, MI 48109-2102, USA,
       {\tt\small mdiagne@umich.edu}}
}
\begin{document}

\maketitle
\thispagestyle{empty}
\pagestyle{empty}

%%%%%%%%%%%%%%%%%%%%%%%%%%%%%%%%%%%%%%%%%%%%%%%%%%%%%%%%%%%%%%%%%%%%%%%%%%%%%%%

\begin{abstract}
In this paper, a backstepping control of the one-phase Stefan Problem, which is a 1-D diffusion Partial Differential Equation (PDE) defined on a time varying spatial domain described by an ordinary differential equation (ODE), is studied. A new nonlinear backstepping transformation for moving boundary problem is utilized to transform the original coupled PDE-ODE system into a target system whose exponential stability is proved. The full-state boundary feedback controller ensures  the exponential stability of the moving interface to a reference setpoint and the ${\cal H}_1$-norm of the distributed temperature by a choice of the setpint satisfying given explicit inequality between initial states that guarantees the physical constraints imposed by the melting process. 
\end{abstract}

%%%%%%%%%%%%%%%%%%%%%%%%%%%%%%%%%%%%%%%%%%%%%%%%%%%%%%%%%%%%%%%%%%%%%%%%%%%%%%%%
\section{INTRODUCTION}

Diffusion PDEs  with moving boundaries have been studied actively for the last few decades, and their understanding continues to be of high interest due to their extent for various industrial processes. Representative applications include sea-ice melting and freezing\cite{wett91}, continuous casting of steel \cite{petrus2012}, crystal-growth \cite{conrad_90}, and thermal energy storage system\cite{Belen03}.

 While the numerical analysis of  these  systems  is widely covered in the literature, their control related problems have been addressed relatively fewer. In addition to it, most of the proposed     control approaches are based on finite dimensional approximations with the assumption of  an explicit and known moving boundary \cite{Daraoui2010},\cite{Armaou01},\cite{Petit10}. For instance, diffusion-reaction processes with explicitly known moving boundaries are investigated in  \cite{Armaou01} based on the concept of inertial manifold \cite{Christofides98_Parabolic} and the  partitioning of the infinite dimensional dynamics into slow and fast  finite dimensional modes. We also refer the reader to \cite{Petit10} for the  motion planning boundary
control  of a one-dimensional one-phase nonlinear Stefan problem based on series representation  which leads to highly complex solutions that reduce the design possibilities. Instead of controlling the position of the liquid-solid interface by means of the temperature or the heat flux at the boundary, \cite{Petit10}  solves  the inverse problem  assuming  a prior known  moving boundary which help to derive the manipulated input.

 More complicated  approaches that lead to significant mathematical complexities in the process characterization are developed  based on an infinite dimensional framework. These methods  impose a number of constraints on the initial data and the state variables to achieve convergence of the dynamical coupled systems to the desired equilibrium. In order to avoid surface cracks during the solidification stage in a steel casting process represented by a diffusion PDE-ODE system defined on a time-varying spatial domain, an enthalpy-based Lyapunov functional  is used in  \cite{petrus2012}   to  ensure stabilization of  the temperature and the moving boundary at the desired setpoint. The aftermentioned results are derived restricting a priori, the input signal to be strictly positive and the moving boundary to be a non-decreasing function of time. In \cite{maidi2014} a geometric control approach \cite{karvaris1990-a,karvaris1990-b,maidi2009}  that enables the manipulation of  the boundary  heat flux to adjust the position of a liquid-solid interface at a desired setpoint is proposed, and the exponential stability of the ${\cal L}_2$-norm of the distributed temperature is ensured using a Lyapunov analysis. 

In this paper, the backstepping control \cite{krstic2008boundary,andrew2004} of a one phase Stefan problem   \cite{petrus2012,petrus2010, maidi2014} is studied. The exploitation of the  PDE  backstepping methodology for moving boundary problems is not well  investigated in the literature. An extension of  the backstepping-based observer design to the state estimation of parabolic PDEs with time-dependent spatial domain was proposed in  \cite{Izadi15} with an application to the well-known Czochralski crystal growth process. The authors  offer  a design tool  for the stabilization of an unstable parabolic PDE system with moving interface using  a collocated boundary measurement and actuation located at the moving boundary. The novelty of our approach relies to the  proposition of a new nonlinear  backstepping transformation that allows to deal with moving boundary problem without the need to rescale the system into a fixed domain. Our design of backstepping transformation for moving boundary stands as an extension of the one proposed in \cite{krstic2009, Gian2011, tang2011state} for linear PDE-ODE defined on a fixed spatial domain. The proposed controller enables the exponential stability of sum of the moving interface and the ${\cal H}_1$-norm of the  temperature profile under physical constraints which restrict the choice of reference setpoint with respect to the initial data. 

This paper is organized as follows: The one-phase Stefan problem  is presented  in Section \ref{model} and the control related problem in Section \ref{statement}. Section  \ref{nonlineartarget} introduces  the  nonlinear backstepping transformation for moving boundary problems. The properties of the proposed backstepping  controller are stated in Section \ref{positivness} and the Lyapunov stability of the closed-loop system under full-state feedback is established in Section \ref{stability}  with the exponential convergence of ${\cal H}_1$-norm of the distributed temperature and the moving boundary to the desired equilibrium.  Supportive numerical simulations are provided in Section \ref{simulation}. The paper ends with final remarks and future directions discussed in  Section \ref{conclusion}.

\section{Description of the Physical Process}\label{model}
\begin{figure}[htb]
\centering
\includegraphics[width=3.2in]{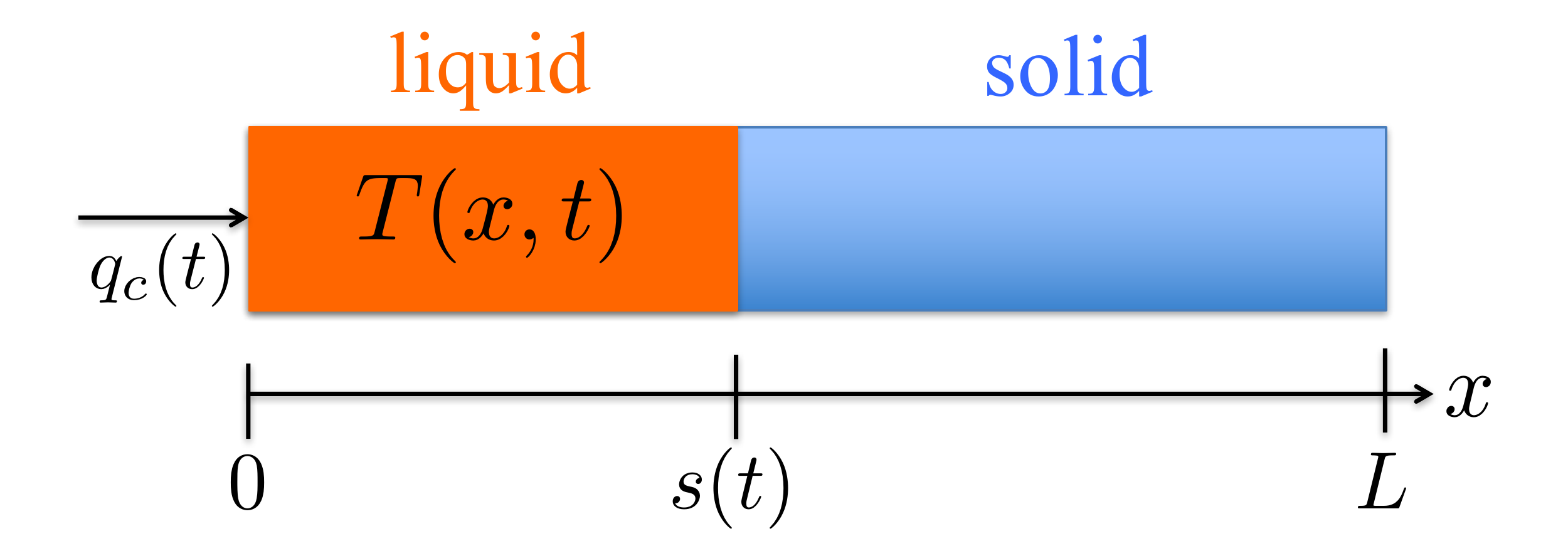}\\
\caption{Schematic of 1D Stefan problem.}
\label{fig:stefan}
\end{figure}
Consider a physical model which describes the melting or solidification mechanism in a pure one-component material of length $L$ in one dimension. In order to describe a position at which phase transition from liquid to solid occurs (or equivalently, in the reverse direction) mathematically, we divide the domain $[0, L]$ into the two time varying sub-domains, namely,  $[0,s(t)]$ occupied by the liquid phase, and $[s(t),L]$ by the solid  phase. A heat flux is manipulated into the material through the boundary at $x=0$ of the liquid phase, which affects the dynamics of the solid-liquid interface. As a consequence, the heat equation alone does not provide a complete description of the phase transition and must be coupled with an unknown dynamics that describes the moving boundary. Assuming that the temperature in the liquid phase is not lower than the melting temperature $T_{m}$ of the material, the following coupled system consisting of 
\begin{itemize}
\item the diffusion equation of the temperature in the liquid-phase which is written as
\begin{align}\label{eq:stefanPDE}
T_t(x,t)=\alpha T_{xx}(x,t), \hspace{1mm} 0\leq x\leq s(t), \hspace{1mm} \alpha :=\f{k}{\rho C_p}, 
\end{align}
with the boundary conditions
\begin{align}\label{eq:stefancontrol}
-k T_x(0,t)=q_{c}(t), \\ \label{eq:stefanBC}
T(s(t),t)=T_{m},
\end{align}
and the initial conditions
\begin{align}\label{eq:stefanIC}
T(x,0)=T_0(x), \quad s_{0} = s_0
\end{align}
where $T(x,t)$,  ${q}_c(t)$,  $\rho$, $C_p$ and  $k$ are the distributed temperature of the liquid phase, manipulated heat flux, liquid density, the liquid heat capacity, and the liquid heat conductivity, respectively.
\end{itemize}
and 
\begin{itemize}
\item the local  energy balance at the position of the liquid-solid interface $x=s(t)$  which yields to  the following ODE
\begin{align}\label{eq:stefanODE}
 \dot{s}(t)=-\beta T_x(s(t),t), \quad \beta :=\frac{k}{\rho \Delta H^*}
\end{align}
that describes the dynamics of moving boundary. The physical parameter  $\Delta H^*$ represents the latent heat of fusion.
\end{itemize}
For the sake of brevity, we refer the readers to  \cite{Gupta03}, where the Stefan condition in the case of a solidification process is derived. 
\begin{remark}\emph{
 As the moving interface  $s(t)$ is unknown explicitly, the problem defined in  \eqref{eq:stefanPDE}--\eqref{eq:stefanODE}  is a highly nonlinear
problem. Note that this non-linearity is purely geometric
rather than algebraic.}\end{remark}
\begin{remark}\label{assumption}\emph{
Due to the so-called isothermal interface condition
 that prescribes the melting temperature $T_{m}$ at the interface through  \eqref{eq:stefanBC},
this form of the Stefan problem is a reasonable model only if the following conditions hold:
\begin{align}
&T(x,t) \geq T_{m} \quad \textrm{ for all }\quad x \in [0,s(t)],\\
&\dot{s}(t)\geq0 \quad \textrm{ for all }\quad t\geq0 \label{moving}
\end{align}}
\end{remark}
From  Remark \ref{assumption}, it is plausible to assume the existence of a positive constant $H$ such that
\begin{align}\label{eq:stefanICbound}
0\leq T_0(x)-T_{m}\leq H(s_{0}-x).
\end{align}
We recall the following lemma that ensures the validity of the model \eqref{eq:stefanPDE}--\eqref{eq:stefanODE}.
\begin{lem}\label{lemma1}
For any $q_{c}(t)>0$ on the finite time interval $(0,\bar{t})$, $T(x,t)>T_{m}, ~\forall x\in(0,s(t))$ and $\forall t\in(0,\bar{t})$. And then $\dot{s}(t) >0$, $\forall t\in(0,\bar{t})$. 
\end{lem}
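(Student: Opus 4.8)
The plan is to prove both claims by a minimum-principle argument applied to the shifted temperature. First I would introduce $v(x,t) := T(x,t)-T_{m}$, which satisfies the same diffusion equation $v_t = \alpha v_{xx}$ on the moving domain $0\le x\le s(t)$, together with the Neumann datum $v_x(0,t) = -q_{c}(t)/k < 0$ from \eqref{eq:stefancontrol}, the homogeneous Dirichlet datum $v(s(t),t)=0$ from \eqref{eq:stefanBC}, and the initial datum $v(x,0)=T_0(x)-T_{m}\ge 0$ guaranteed by \eqref{eq:stefanICbound}. The first claim is then equivalent to showing $v>0$ in the open region $\{0<x<s(t),\ 0<t<\bar t\}$, and the second will follow from the sign of $v_x$ at the interface.

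For the first claim I would argue by contradiction using the weak and strong minimum principles for the heat equation. Fixing $t\in(0,\bar t)$ and working on the space-time region $\Omega_t=\{(x,\tau):0<x<s(\tau),\ 0<\tau\le t\}$, the minimum of $v$ over $\bar\Omega_t$ is attained on the parabolic boundary. It cannot be negative on the initial slice (where $v\ge 0$) nor on the interface $x=s(\tau)$ (where $v=0$), so any negative minimum could occur only on the actuated face $x=0$. But at such a boundary minimum Hopf's lemma forces the outward normal derivative to be strictly negative, i.e.\ $-v_x(0,\tau)<0$, hence $v_x(0,\tau)>0$, contradicting the imposed sign $v_x(0,\tau)=-q_{c}(\tau)/k<0$. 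Thus $v\ge 0$ throughout. Strict positivity in the interior then follows from the strong minimum principle: were $v$ to vanish at an interior point, it would be identically zero on the preceding time interval, again forcing $v_x(0,\cdot)=0$ and contradicting the flux condition. This establishes $T(x,t)>T_{m}$ for $x\in(0,s(t))$.

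For the second claim I would apply Hopf's lemma once more, now at the interface. Since $v>0$ in the interior and $v(s(t),t)=0$, the point $(s(t),t)$ is a strict boundary minimum, so the outward normal derivative there, pointing in the $+x$ direction, is strictly negative, giving $T_x(s(t),t)=v_x(s(t),t)<0$. Substituting into the Stefan condition \eqref{eq:stefanODE} yields $\dot s(t)=-\beta T_x(s(t),t)>0$, as claimed.

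The step I expect to be the main obstacle is the rigorous application of Hopf's lemma at the \emph{moving} boundary $x=s(t)$, which is a curve rather than a fixed face in the $(x,t)$ plane. Its use requires an interior-parabola (barrier) condition at each interface point, and hence sufficient regularity of $s(\cdot)$, Lipschitz continuity or better, to construct an admissible barrier; this is precisely where the classical-solution hypothesis underlying \eqref{eq:stefanPDE}--\eqref{eq:stefanODE} must be invoked. A secondary technical care is the correct treatment of the Neumann face $x=0$ within the minimum principle, which is exactly what the first Hopf argument above is designed to handle.
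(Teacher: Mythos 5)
Your argument is correct and takes essentially the same route as the paper, which does not spell out a proof but states that Lemma~\ref{lemma1} follows from the Maximum Principle and Hopf's Lemma as in \cite{Gupta03} --- precisely the minimum-principle argument on the parabolic domain, with Hopf's lemma ruling out a negative minimum on the Neumann face $x=0$ and then giving $T_x(s(t),t)<0$ at the interface, that you sketch. The regularity caveat you raise about applying Hopf's lemma along the moving curve $x=s(t)$ is the right technical point, and it is handled in the cited reference under the classical-solution hypothesis.
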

The proof of the lemma \ref{lemma1} is based on Maximum Principle and Hopf's Lemma as shown in \cite{Gupta03}. 

\section{Control Problem statement}\label{statement}
In this model, the heat flux $q_{c}(t)$ is manipulated as a boundary controller. The objective of control is to drive the moving boundary $s(t)$ to a desired position $s_{r}$ while ensuring the convergence of the ${\cal H}_1$-norm of the temperature $T(x,t)$ in the liquid phase. We denote the reference error of liquid temperature and the moving interface as $u(x,t)=T(x,t)-T_{m}$ and $X(t)=s(t)-s_{r}$, respectively. The main theorem of this paper is stated as follows.

\begin{thm}
Consider a closed-loop system consisting of the plant \eqref{eq:stefanPDE}--\eqref{eq:stefanODE} and the control law
\begin{align}\label{Fullcontrol}
q_c(t)=-ck \left(\frac{1}{\alpha}\int_0^{s(t)} (T(x,t)-T_m) dx \right.\nonumber\\
\left.+\frac{1 }{\beta}(s(t)-s_r)\right). 
\end{align}
where $c>0$ is an arbitral controller gain. Assume that the initial condition $(T_0(x), s_{0})$ is compatible with the control law and satisfies \eqref{eq:stefanICbound}. For any reference setpoint $s_r$ which satisfies the following inequality 
\begin{align}\label{compatibility}
s_r>s_{0}+\frac{C_p}{\Delta H^*}\int_{0}^{s_{0}} (T_0(x)-T_m) dx,
\end{align}
the closed-loop system is exponentially stable in the sense of the norm
\begin{align}
||T-T_m||_{{\cal H}_1}^2+(s(t)-s_r)^2.
\end{align}
The proof is to be given below.
\end{thm}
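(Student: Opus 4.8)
The plan is to pass to the reference-error variables $u(x,t)=T(x,t)-T_m$ and $X(t)=s(t)-s_r$, in which the plant reads $u_t=\alpha u_{xx}$, $u(s(t),t)=0$, $-ku_x(0,t)=q_c(t)$, $\dot X=-\beta u_x(s(t),t)$, and first to extract the scalar that the feedback \eqref{Fullcontrol} actually regulates. Differentiating $V_1(t):=\frac{1}{\alpha}\int_0^{s(t)}u\,dx+\frac{1}{\beta}X$ and using $u(s(t),t)=0$, the heat equation, and the two boundary relations, the interior and interface contributions telescope and leave $\dot V_1=-u_x(0,t)=q_c(t)/k$; since \eqref{Fullcontrol} is exactly $q_c=-ckV_1$, one gets $V_1(t)=V_1(0)e^{-ct}$. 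The setpoint inequality \eqref{compatibility} is precisely the statement $V_1(0)<0$ (multiply by $\beta$ and use $\beta/\alpha=C_p/\Delta H^*$), whence $q_c(t)>0$ for all $t$. By Lemma \ref{lemma1} this forces $u>0$ and $\dot s>0$, while $V_1<0$ together with $u>0$ gives $X<0$, i.e. $s_0\le s(t)<s_r$. These three a priori facts — $\dot s\ge 0$, boundedness of $s$ by $s_r$, and hence $\int_0^t\dot s\,d\tau=s(t)-s_0\le s_r-s_0$ — are what will ultimately tame the nonlinearity.

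Next I would introduce the nonlinear backstepping transformation
\begin{equation*}
w(x,t)=u(x,t)-\frac{c}{\alpha}\int_x^{s(t)}(x-y)u(y,t)\,dy-\frac{c}{\beta}\bigl(x-s(t)\bigr)X(t),
\end{equation*}
whose kernel and ODE gain are chosen so that $w(s(t),t)=0$ and $w_x(0,t)=0$ (the latter uses the feedback expression for $u_x(0,t)$, which is where the cancellation fixing the kernel occurs). A direct computation, in which the moving limit generates $\dot s$-terms through $u(s,t)=0$ and $u_x(s,t)=-\dot s/\beta$, shows that $(w,X)$ obeys the target system $w_t=\alpha w_{xx}+\frac{c}{\beta}\dot s(t)X(t)$, $w_x(0,t)=0$, $w(s(t),t)=0$, $\dot X=-cX-\beta w_x(s(t),t)$. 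Because the map is Volterra in $u$ with bounded kernels over the uniformly bounded domain $[0,s_r]$, it is boundedly invertible, so $\|u\|_{\mathcal H_1}^2+X^2$ and $\|w\|_{\mathcal H_1}^2+X^2$ are equivalent and it suffices to stabilize the target.

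For the target I would take $V=\frac{A}{2}\|w\|_{L^2}^2+\frac{B}{2}\|w_x\|_{L^2}^2+\frac{D}{2}X^2$. Using $w(s,t)=0$, $w_x(0,t)=0$ and $w_t(s,t)=-\dot s\,w_x(s,t)$, integration by parts produces the coercive negative terms $-A\alpha\|w_x\|^2-B\alpha\|w_{xx}\|^2-DcX^2$, the sign-definite boundary term $-\frac{B}{2}\dot s\,w_x(s,t)^2\le 0$ (here $\dot s\ge 0$ is essential), and three indefinite cross terms. The quadratic term $-D\beta X w_x(s,t)$ is controlled by the trace estimate $w_x(s,t)^2=\bigl(\int_0^s w_{xx}\,dx\bigr)^2\le s_r\|w_{xx}\|^2$ and Young's inequality, choosing $B$ large relative to $D$ so that it is absorbed into $-B\alpha\|w_{xx}\|^2-DcX^2$. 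The cubic term $-\frac{c}{\beta}B\dot s\,X w_x(s,t)$ is split by Young's inequality so that its $\dot s\,w_x(s,t)^2$ part is swallowed by the free term $-\frac{B}{2}\dot s\,w_x(s,t)^2$, leaving only a $\dot s$-weighted residual $\dot s(\kappa_1 X^2+\kappa_2\|w\|^2)$, of the same form as the remaining cubic term coming from the PDE source.

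The main obstacle is precisely these residual cubic terms $\kappa\,\dot s(t)\,V$: $\dot s$ is not a priori pointwise small, so one cannot dominate them by the coercive part at each instant. The resolution is to exploit the sign and integrability of $\dot s$ rather than a pointwise bound: after the Poincaré inequality $\|w\|^2\le C_P\|w_x\|^2$ (valid since $w(s,t)=0$, with $C_P$ uniform because $s<s_r$) the estimate closes to $\dot V\le(-\gamma+\kappa\,\dot s(t))V$ for some $\gamma>0$. Grönwall's inequality then gives $V(t)\le V(0)\exp\bigl(\kappa\int_0^t\dot s\,d\tau-\gamma t\bigr)\le V(0)e^{\kappa(s_r-s_0)}e^{-\gamma t}$, so the bounded total displacement $\int_0^\infty\dot s\,d\tau\le s_r-s_0$ merely inflates the constant while preserving the exponential rate. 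Transferring this decay back through the bounded inverse transformation yields exponential stability of $\|T-T_m\|_{\mathcal H_1}^2+(s(t)-s_r)^2$, as claimed.
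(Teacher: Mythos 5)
Your proposal is correct and follows essentially the same route as the paper: positivity of $q_c$ and the bounds $s_0\le s(t)<s_r$ obtained from the exponentially decaying quantity $\tfrac{1}{\alpha}\int_0^{s}u\,dx+\tfrac{1}{\beta}X$ (equivalently the paper's computation $\dot q_c=-cq_c$), the same backstepping transformation and target system, the same ${\cal H}_1$ Lyapunov functional with a weighted $X^2$ term, and the same device of absorbing the $\dot s$-weighted cubic terms using $\dot s\ge 0$ and $\int_0^\infty \dot s\,d\tau\le s_r-s_0$ (your Gr\"onwall factor $e^{\kappa(s_r-s_0)}$ is exactly the paper's weight $e^{-as(t)}$), the only presentational difference being that you assert bounded invertibility of the Volterra transformation where the paper computes the equivalence constants $M_1,\dots,M_8$ explicitly. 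Incidentally, your sign convention for the integral kernel is the self-consistent one---it is the version for which $w_x(0,t)=0$ actually holds under \eqref{Fullcontrol} and which agrees with the paper's own inverse transformation---whereas the printed \eqref{eq:DBST} carries a sign typo on the integral term.
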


\section{Backstepping Transformation for Moving Boundary Formulation}\label{nonlineartarget}
\subsection{Direct transformation}
In this section, we introduce the following new backstepping transformation in order to achieve a target system that is exponentially stable. 
\begin{align}\label{eq:DBST}
w(x,t)=&u(x,t)-\frac{c}{\alpha} \int_{x}^{s(t)} (y-x)u(y,t) dy\nonumber\\
&+\frac{c}{\beta}(s(t)-x) X(t)
\end{align}
This transformation is an extension of the one first introduced in \cite{krstic2009}, to moving boundary problems proposed in \cite{Izadi15}. 
By taking the derivative of \eqref {eq:DBST} with respect to $t$ and $x$ respectively along the solution of \eqref{eq:stefanPDE}-\eqref{eq:stefanODE} with the control law \eqref{Fullcontrol}, we derive the following target system
\begin{align}\label{eq:tarPDE}
w_t(x,t)=\alpha w_{xx}(x,t)+\frac{c}{\beta}\dot{s}(t) X(t)
\end{align}
with the boundary conditions which are given as
\begin{align}\label{eq:tarBC1}
w(s(t),t) = 0\\
\label{eq:tarBC2}
w_x(0,t) = 0.
\end{align}
with the help of  \eqref {eq:DBST}, the ODE \eqref {eq:stefanODE} is rewritten as 
\begin{align}\label{eq:tarODE}
\dot{X}(t)=-cX(t)-\beta w_x(s(t),t)
\end{align}

\subsection{Inverse transformation}
The original system \eqref{eq:stefanPDE}--\eqref{eq:stefanODE} and the target system \eqref{eq:tarPDE}--\eqref{eq:tarODE} have equivalent stability properties if the transformation \eqref{eq:DBST} is invertible. 
Let us consider the following inverse transformation 
\begin{align}\label{eq:IBST}
u(x,t) = w(x,t) +\int_x^{s(t)} l(x,y)w(y,t) dy\nonumber\\
+h(s(t)-x)X(t),
\end{align}
where $l(x,y)$ is the kernel function. Taking derivative of \eqref{eq:IBST} with respect to $t$ and $x$ respectively along the solution of \eqref{eq:tarPDE}-\eqref{eq:tarODE}, the following relation is derived  
\begin{align}\label{time-deriv}
&u_t(x,t) -\alpha u_{xx}(x,t) = 2\alpha \left(\frac{d}{dx}l(x,x)\right)w(x,t)\nonumber\\
&-\alpha \int_x^{s(t)} (l_{xx}(x,y)-l_{yy}(x,y))w(y,t) dy\nonumber\\
&+\left(\frac{c}{\beta}\left(1+\int_x^{s(t)} l(x,y) dy\right)+h'(s(t)-x)\right) \dot{s}(t) X(t)\nonumber\\
&-\left(\alpha h''(s(t)-x)+ch(s(t)-x)\right)X(t)\nonumber\\
&+(\alpha l(x,s(t))-\beta h(s(t)-x))w_x(s(t),t)\\
&u(s(t),t)=h(0)X(t)\\
&u_x(s(t),t)=w_x(s(t),t)-h'(0)X(t)\label{eq:gradient}
\end{align}
In order to hold \eqref{eq:stefanPDE}-\eqref{eq:stefanODE} for any continuous functions $(w(x,t), X(t))$, by \eqref{time-deriv}-\eqref{eq:gradient}, $h(s(t)-x)$ and $l(x,y)$ satisfy 
\begin{align}
\label{eq:hode}
&\alpha h''(s(t)-x)+c h(s(t)-x)=0\\
\label{eq:hic1}
&h(0)=0, \quad h'(0)=-\frac{c}{\beta}\\
\label{eq:kernelpde}&l_{xx}(x,y)-l_{yy}(x,y)=0 \\
& \frac{d}{dx}l(x,x)=0\\
&\frac{c}{\beta}\left(1+\int_x^{s(t)} l(x,y) dy\right)+h'(s(t)-x)=0\\
\label{eq:kernelrel}&\alpha l(x,s(t))-\beta  h(s(t)-x)=0
\end{align}
By \eqref{eq:hode} and \eqref{eq:hic1}, the solution of $h(s(t)-x)$ is given by
\begin{align}\label{hxy}
h(s(t)-x )=-\frac{\sqrt{c\alpha}}{\beta}{\rm sin}\left(\sqrt{\frac{c}{\alpha}}(s(t)-x) \right)
\end{align}
In addition,  the conditions \eqref{eq:kernelpde}-\eqref{eq:kernelrel} hold for 
\begin{align}\label{lxy}
l(x,s(t))=-\sqrt{\frac{c}{\alpha}}{\rm sin}\left(\sqrt{\frac{c}{\alpha}}(s(t)-x)\right)
\end{align}
Finally, from \eqref{hxy} and \eqref{lxy}, the  inverse transformation is written as
\begin{align}
u(x,t) = &w(x,t)-\frac{\sqrt{c\alpha}}{\beta}{\rm sin}\left(\sqrt{\frac{c}{\alpha}}(s(t)-x) \right)X(t)\nonumber\\ & -\int_x^{s(t)} \sqrt{\frac{c}{\alpha}}{\rm sin}\left(\sqrt{\frac{c}{\alpha}}(y-x)\right)w(y,t) dy
\end{align}
\section{Physical constraints}\label{positivness}
Noting that $q_c(t)>0$ is required by Remark \ref{assumption} and Lemma \ref{lemma1}, the overshoot beyond the reference $s_r$ is prohibited to achieve the control objective $s(t) \to s_r$ due to its increasing property \eqref{moving}, which means $s(t)<s_r$ is required to be satisfied for $\forall t>0$. In this section, we derived  the condition that guarantees these two conditions 
\begin{align}\label{physical constraints}
q_c(t)>0, \quad  s(t)<s_r, \quad  \forall t>0
\end{align}
, namely "physical constraints". Physically, a negative controller may lead to a freezing process.\begin{prop}
If the initial condition satisfies \eqref{compatibility}, then $q_{c}(t)>0$ and $s_{0}<s(t)<s_r$ for $\forall t>0$.  
\end{prop}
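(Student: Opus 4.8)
The plan is to observe that the scalar quantity appearing inside the control law is itself governed by a trivial linear ODE, which dictates every sign in the statement. Define
\[
E(t):=\frac{1}{\alpha}\int_0^{s(t)}(T(x,t)-T_m)\,dx+\frac{1}{\beta}(s(t)-s_r),
\]
so that the feedback \eqref{Fullcontrol} reads simply $q_c(t)=-ck\,E(t)$. The first and decisive step is to prove $\dot E(t)=-c\,E(t)$. Differentiating $E$ with the Leibniz rule for the moving upper limit $s(t)$, the boundary contribution $(T(s(t),t)-T_m)\dot s(t)$ vanishes by the isothermal condition \eqref{eq:stefanBC}; inserting $T_t=\alpha T_{xx}$ and integrating gives $\frac{1}{\alpha}\frac{d}{dt}\int_0^{s(t)}(T-T_m)\,dx=T_x(s(t),t)-T_x(0,t)$. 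The interface flux $T_x(s(t),t)$ then cancels exactly against the term coming from $\dot s(t)=-\beta T_x(s(t),t)$ in \eqref{eq:stefanODE}, leaving $\dot E(t)=-T_x(0,t)=q_c(t)/k=-c\,E(t)$ by the actuation condition \eqref{eq:stefancontrol}. Hence $E(t)=E(0)e^{-ct}$.

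Second, I would rewrite the hypothesis \eqref{compatibility} as a sign condition on $E(0)$. Using $\beta/\alpha=C_p/\Delta H^*$, a one-line rearrangement shows that \eqref{compatibility} is exactly equivalent to $E(0)<0$. Since $E(t)=E(0)e^{-ct}$ preserves sign, $E(t)<0$ for all $t\ge0$, and therefore $q_c(t)=-ck\,E(t)>0$ for all $t>0$, which is the first assertion.

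Third, with $q_c(t)>0$ in hand, Lemma \ref{lemma1} supplies both $\dot s(t)>0$ and $T(x,t)>T_m$ on $(0,s(t))$. Monotonicity of $s$ gives $s(t)>s_0$ directly. For the upper bound I would read $s(t)<s_r$ off the now-established inequality $E(t)<0$: because $\int_0^{s(t)}(T-T_m)\,dx\ge0$, the relation $\frac{1}{\beta}(s(t)-s_r)<-\frac{1}{\alpha}\int_0^{s(t)}(T-T_m)\,dx\le0$ forces $s(t)<s_r$. Together these yield $s_0<s(t)<s_r$ for all $t>0$.

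The main obstacle is the careful bookkeeping in the first step: one must verify that the Leibniz boundary term is annihilated by \eqref{eq:stefanBC} and that the two occurrences of the interface flux cancel so cleanly that $E$ decouples from the distributed state. Once $\dot E=-cE$ is secured, the remainder is elementary. A minor point to flag is that the argument is run on any interval $(0,\bar t)$ on which the closed-loop solution exists, which is precisely the setting of Lemma \ref{lemma1}; the sign conclusions then persist for all $t>0$.
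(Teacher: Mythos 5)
Your proposal is correct and is essentially the paper's own argument: the paper differentiates $q_c(t)$ directly and obtains $\dot q_c=-cq_c$ (using $u(s(t),t)=0$, $T_t=\alpha T_{xx}$, and the cancellation of the interface flux against $\dot s/\beta$), which is the same computation as your $\dot E=-cE$ up to the constant factor $q_c=-ckE$. The remaining steps—sign of $E(0)$ from \eqref{compatibility}, invocation of Lemma \ref{lemma1}, and reading $s(t)<s_r$ off the control law—match the paper exactly.
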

\begin{proof}
By taking the time derivative of \eqref{Fullcontrol}, we have
\begin{align}
\dot{q}_{c}(t)=&-ck\left( \frac{1}{\alpha}\int_{0}^{s(t)} u_t(x,t) dx+\frac{1}{\alpha}\dot{s}(t)u(s(t),t)\right.\nonumber\\
&\left.+\frac{1}{\beta} \dot{X}(t)\right)=cku_x(0,t)=-cq_{c}(t)
\end{align}
The solution of $q_{c}(t)$ can be written explicitly as
\begin{align}
q_{c}(t)=q_{c}(0)e^{-ct}
\end{align}
Therefore, if the initial condition satisfies \eqref{compatibility}, which leads to $q_{c}(0)>0$, then we can state that $q_{c}(t)>0$ for any $t$. Next, by Lemma 1, if $q_{c}(t)>0$ for $\forall t>0$, then it deduces to 
\begin{align}\label{properties}
T(x,t)>T_m, \quad  \dot{s}(t)>0, \quad  \forall y\in (0,s(t)),  \forall t>0.
\end{align}
Knowing  $q_{c}(t)>0$ and $T(x,t)>T_m$, by \eqref{Fullcontrol}, we deduce
\begin{align}
s(t)<s_r,\quad \forall t>0\label{eq:error}
\end{align}
In addition, by $\dot{s}(t)>0$, it leads to $s_{0}< s(t)$. Connecting this result with \eqref{eq:error}, we derive 
\begin{align}\label{position}
s_{0}<s(t)<s_r, \quad \forall t>0.  
\end{align}
\end{proof}
In the following, we assume that \eqref{compatibility} is satisfied. We use the property of \eqref{properties} and \eqref{position} to show the Lyapunov stability. 

\section{Lyapunov Stability}\label{stability}
We recall that in order to design backstepping controller \eqref{Fullcontrol} and the target system \eqref{eq:tarPDE}--\eqref{eq:tarODE}   we introduce the transformation \eqref{eq:DBST} and its inverse \eqref{eq:IBST}. The exponential  stability of the scaled system \eqref{eq:stefanPDE}--\eqref{eq:stefanODE} is guaranted if  the nonlinear target system  \eqref{eq:tarPDE}--\eqref{eq:tarODE} is  exponentially stable and the  transformation \eqref{eq:DBST} admits a unique inverse defined as \eqref{eq:IBST}. In the following we establish the exponential stability of  the closed-loop control system $ H^1$-norm of the temperature and the moving boundary  based on the analysis of the associated target system \eqref{eq:tarPDE}--\eqref{eq:tarODE} .

\subsection{Exponential stability for the target $(w,X)$-system}
Let $V_{1}$ be a functional such that
\begin{align}\label{eq:lyapunov}
V_{1} = &\frac{1}{2}\int_0^{s(t)} w(x,t)^2 dx\nonumber\\
&+ \frac{1}{2}\int_0^{s(t)} w_x(x,t)^2 dx+\frac{p}{2}X(t)^2
\end{align}
with a positive number $p>0$ which is chosen later. Then, by taking the derivative of \eqref{eq:lyapunov} along the solution of the target system \eqref{eq:tarPDE}-\eqref{eq:tarODE} , we have the following
\begin{align}\label{eq:candidate}
\frac{dV_{1}}{dt}&=-\alpha\int_0^{s(t)} w_{xx}(x,t)^2 dx\nonumber\\&-\alpha\int_0^{s(t)} w_{x}(x,t)^2 dx-pcX(t)^2-p\beta X(t) w_x(s(t),t)\nonumber\\
&+\dot{s}(t)\left(\alpha\frac{c}{\beta} X(t) \int_0^{s(t)} w(x,t) dx \right. \nonumber\\ &\left. -\alpha\frac{c}{\beta} X(t)w_{x}(s(t),t)-\frac{1}{2}w_x(s(t),t)^2\right)
\end{align}
By \eqref{position}, Pointcare's inequality and Agmon's inequality are obtained as
\begin{align}
\int_0^{s(t)}w(x,t)^2 dx\leq 4s_{r}^{2}\int_0^{s(t)}w_{x}(x,t)^2 dx , \\
w_x(s(t),t)^2\leq 4s_{r}\int_0^{s(t)}w_{xx}(x,t)^2 dx .
\end{align}
Applying these, Young's inequality, Cauchy-Schwartz inequality, and \eqref{properties} to \eqref{eq:candidate}, we have
\begin{align}
&\frac{dV_{1}}{dt}\nonumber\\
&\leq-\frac{\alpha}{4s_{r}^{2}+1}\left(\int_0^{s(t)} w_{x}(x,t)^2 dx+\int_0^{s(t)} w(x,t)^2 dx\right)\nonumber\\
&-p\left(c-\frac{p\beta^2s_{r}}{\alpha}\right)X(t)^2\nonumber\\
&+\frac{c\alpha\dot{s}(t)}{2\beta}\left( \int_0^{s(t)} w(x,t)^2 dx+ \left(1+\frac{c\alpha}{\beta}\right)X(t)^2\right)
\end{align}
Suppose that $p$ is chosen to satisfy the following
\begin{align}\label{eq:gain}
0<p<\frac{c\alpha}{\beta^2s_{r}}
\end{align}
For any choice of $c>0$, there exists $p>0$ such that \eqref{eq:gain} holds. Consider the Lyapunov functional
\begin{align}\label{eq:lyap}
V=V_{1}e^{-as(t)}
\end{align}
then, by choosing the parameter $a$ such that
\begin{align}
a=\frac{c\alpha}{2\beta}{\rm max}\left\{1, \frac{1}{p}\left(1+\frac{c\alpha}{\beta}\right)\right\}, 
\end{align}
we have
\begin{align}
\frac{dV}{dt}&\leq\left[-\frac{\alpha}{4s_{r}^{2}+1}\left(\int_0^{s(t)} w_{x}(x,t)^2 dx\right.\right.\nonumber\\ &+\left.\left.\int_0^{s(t)} w(x,t)^2 dx\right)-p\left(c-\frac{p\beta^2s_{r}}{\alpha}\right)X(t)^2\right]e^{-as(t)}\nonumber\\
&\leq -bV
\end{align}
where $b$ is defined as
\begin{align}
b={\rm min}\left\{\frac{2\alpha}{4s_{r}^{2}+1}, 2\left(c-\frac{p\beta^2s_{r}}{\alpha}\right)\right\}
\end{align}
By the relation \eqref{eq:lyap} and Corollary 1 that $s_{0}\leq s(t) \leq s_{r}$, we arrive at
\begin{align}\label{eq:exp}
V_{1}(t)\leq V_{1}(0)e^{a(s_{r}-s_{0})}e^{-bt}
\end{align}

\subsection{Exponential stability for the original $(u, X)$-system}
From the direct transformation (\ref{eq:DBST}) and the inverse transformation (\ref{eq:IBST}) and using Young's and Cauchy-Schwarz inequality, we have
\begin{align}
\hspace{-1mm}\int_0^{s(t)}w(x,t)^2 dx & \leq M_1\int_0^{s(t)}u(x,t)^2dx+ M_2X(t)^2\label{int}
\end{align}
and
\begin{align}
\int_0^{s(t)}&w_x(x,t)^2 dx \leq 3\int_0^{s(t)}u_x(x,t)^2dx \nonumber\\ &+ M_3\int_0^{s(t)}u(x,t)^2dx +M_4X(t)^2\label{int_x}
\end{align}
\begin{align}
\hspace{-1mm}\int_0^{s(t)}u(x,t)^2  dx &\leq M_5\int_0^{s(t)}w(x,t)^2dx +  M_6X(t)^2\label{int2}
\end{align}
\begin{align}
&\int_0^{s(t)} u_x(x,t)^2  dx  \leq 3\int_0^{s(t)}w_x(x,t)^2dx  \nonumber\\ &+  M_7\int_0^{s(t)} w(x,t)^2 dx+M_8X(t)^2\label{int_x2}
\end{align}

where
\begin{align}
M_1=3\left(1+\frac{c^2s_{r}^{3}}{3\alpha^2}\right), \quad M_2= \frac{c^2s_{r}^{3}}{\beta^2}\\
M_3=3\frac{c^2s_{r}}{\alpha^2}, \quad M_4=3\frac{c^2s_{r}}{\beta^2}\\
M_5=3\left(1+\frac{cs_{r}^{2}}{2\alpha}\left\{1-{\rm sinc}\left(2\sqrt{\frac{c}{\alpha}}s_{r}\right)\right\}\right)\\
M_6=\frac{3c\alpha s_{r}}{2\beta^2}\left\{1-{\rm sinc}\left(2\sqrt{\frac{c}{\alpha}}s_{r}\right)\right\}\\
M_7=\frac{3c^2 s_{r}^{2}}{2\alpha^2}\left\{1+{\rm sinc}\left(2\sqrt{\frac{c}{\alpha}}s_{r}\right)\right\}\\
M_8=\frac{3c^2 s_{r}}{2\beta^2}\left\{1+{\rm sinc}\left(2\sqrt{\frac{c}{\alpha}}s_{r}\right)\right\}
\end{align}
Adding \eqref{int} to \eqref{int_x} and \eqref{int2} to \eqref{int_x2}, we derive the following inequality
\begin{align}\label{eq:bound}
&\underline{\delta}\left(\int_0^{s(t)}u(x,t)^2 dx+\int_0^{s(t)}u_x(x,t)^2dx+X(t)^2\right)\nonumber\\
&\hspace{-2mm}\leq\int_0^{s(t)}w(x,t)^2 dx+\int_0^{s(t)}w_x(x,t)^2 dx +pX(t)^2\nonumber\\
&\hspace{-2mm}\leq \bar{\delta}\left(\int_0^{s(t)}u_x(x,t)^2dx+\int_0^{s(t)}u(x,t)^2dx+X(t)^2\right)
\end{align}
where
\begin{align}
&\bar{\delta}={\rm max}\{M_1+M_3, p+M_2+M_4\}\nonumber\\
&\underline{\delta}=\frac{{\rm min}\left\{1,p\right\}}{{\rm max}\left\{M_5+M_7, M_6+M_8+1\right\}}
\end{align}
Defining the parameter $D>0$ as
\begin{align}
D=\frac{\bar{\delta}}{\underline{\delta}}e^{as_r},
\end{align}
by \eqref{eq:exp} and \eqref{eq:bound}, for any choice of $c>0$ there exists $D>0$ and $b>0$ such that
\begin{align}
&\int_0^{s(t)}u(x,t)^2 dx+\int_0^{s(t)}u_x(x,t)^2dx+X(t)^2\nonumber\\
&\leq D\left(\int_0^{s_{0}}u_0(x)^2 dx+\int_0^{s_{0}}u_{0}'(x)^2dx+X(0)^2\right)e^{-bt},
\end{align}
which completes the proof of Theorem 1. 

\section{Numerical Simulation}\label{simulation}
	As in \cite{maidi2014}, the simulation is performed considering a strip of zinc whose physical properties are given in Table 1 using the well known boundary immobilization method and finite difference semi-discretization. The setpoint of the interface is $s_{r}$=0.35m. The initial distribution of the temperature error is set as $T_0(x)-T_m=H(s_{0}-x)$ with $H$=10000K$\cdot {\rm m}^{-1}$. The controller gain $c$ is chosen arbitrarily, but small enough to avoid numerical instabilities, and here it is chosen $c$=0.01. The dynamics of the moving interface $s(t)$ and ${\cal H}_1$-norm of temperature error $||T(x,t)-T_m||_{{\cal H}_1}$ are depicted in Fig. \ref{fig:interface} and Fig. \ref{fig:h1}, respectively for two different initial position of the moving interface, namely, $s_{0}$=0.01m (blue dash) and $s_{0}$=0.25m (red dash). Time evolution of the control input  is depicted in Fig \ref{fig:control}. The simulation of coupled system with $s_{0}$ =0.01m shows that the interface converges to its setpoint while keeping $\dot{s}(t)>0$ and $s(t)<s_r$ with a positive control signal $q_{c}(t)>0$ as we expected from theoretical result, because the setpoint and initial condition satisfy \eqref{compatibility}. However, the system with the interface initialized at the position $s_{0}$ =0.25m leads to $\dot{s}(t)<0$,  $s(t)>s_r$, and a negative  control signal because the choice of setpoint doesn't satisfy \eqref{compatibility}. Therefore, the numerical simulation is consistent with our theoretical result. We emphasize that the proposed controller does not require the restriction imposed in \cite{maidi2014}  regarding the material properties, although the equation of the controller is the same as the one proposed in  \cite{petrus2010} for a Stefan problem which describes a solidification process. In that sense,  the proposed controller in the present result  offers more modularity to a large class of materials, and guarantees the exponential stability of the sum of interface error and ${\cal H}_1$-norm of temperature error, compared to \cite{petrus2010} which provides only an asymptotical stability result. 

\begin{table}[htb]
\caption{Physical properties of zinc}
\begin{center}
    \begin{tabular}{| l | l | l | }
    \hline
    $\textbf{Description}$ & $\textbf{Symbol}$ & $\textbf{Value}$ \\ \hline
    Density & $\rho$ & 6570 ${\rm kg}\cdot {\rm m}^{-3}$\\ 
    Latent heat of fusion & $\Delta H^*$ & 111,961${\rm J}\cdot {\rm kg}^{-1}$ \\ 
    Heat Capacity & $C_p$ & 389.5687 ${\rm J} \cdot {\rm kg}^{-1}\cdot {\rm K}^{-1}$  \\  
    Thermal conductivity & $k$ & 116 ${\rm w}\cdot {\rm m}^{-1}$  \\ \hline
    \end{tabular}
\end{center}
\end{table}

\begin{figure}[htb]
\centering
\includegraphics[width=3.2in]{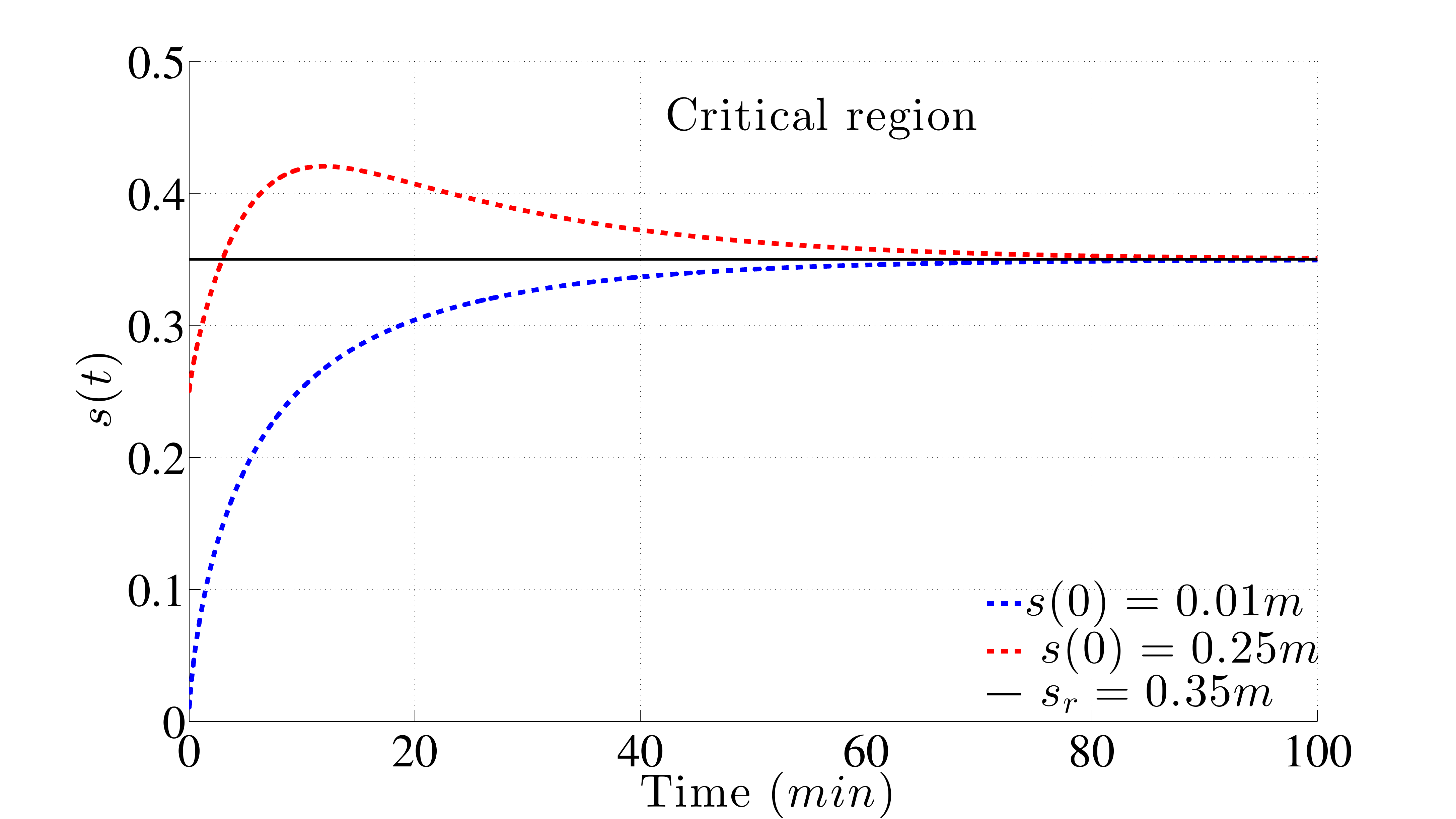}
\caption{The moving interface.}
\label{fig:interface}
\end{figure}

\begin{figure}[htb]
\centering
\includegraphics[width=3.2in]{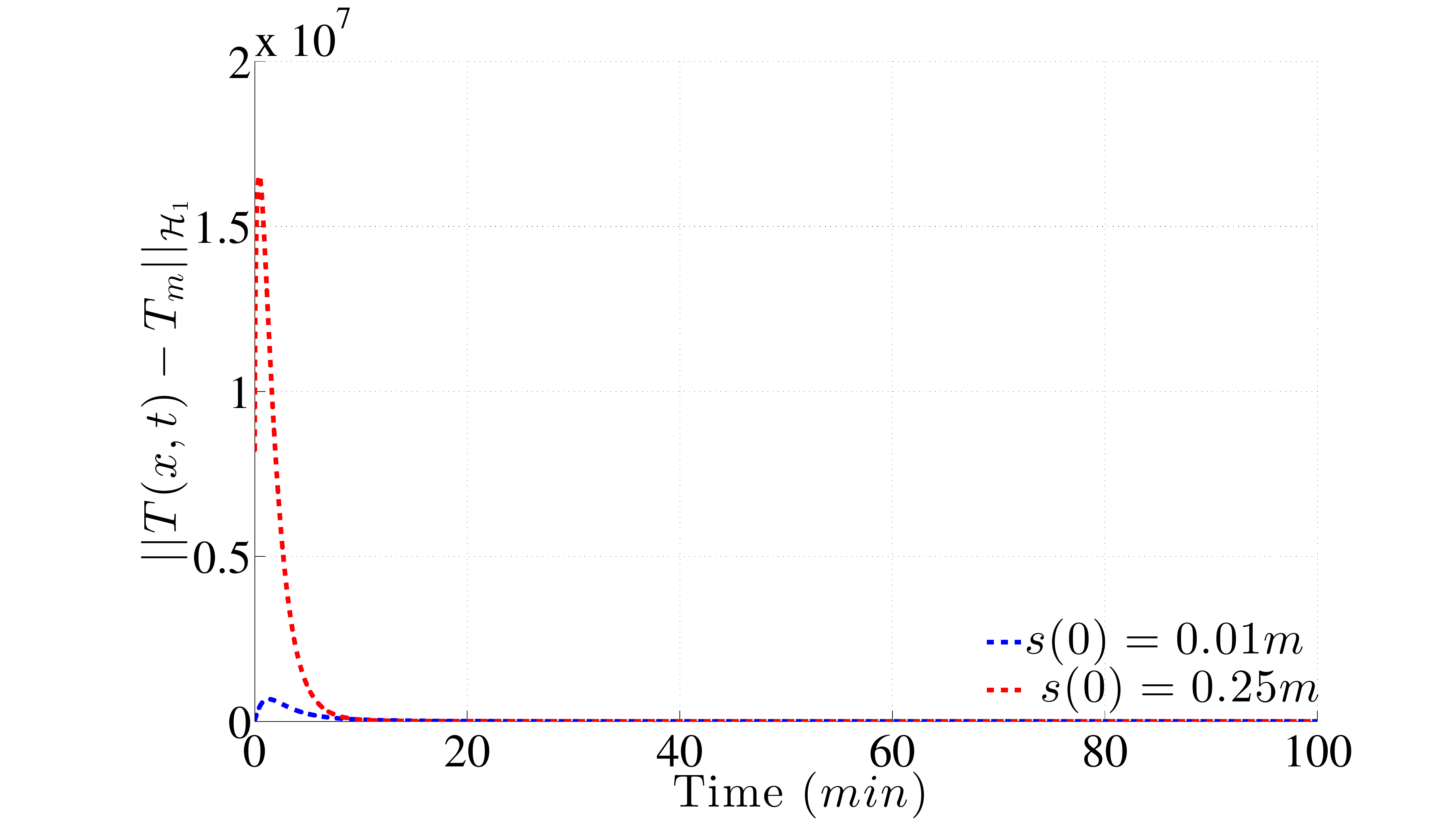}
\caption{${\cal H}_1$-norm of the temperature.}
\label{fig:h1}
\end{figure}

\begin{figure}[htb]
\centering
\includegraphics[width=3.2in]{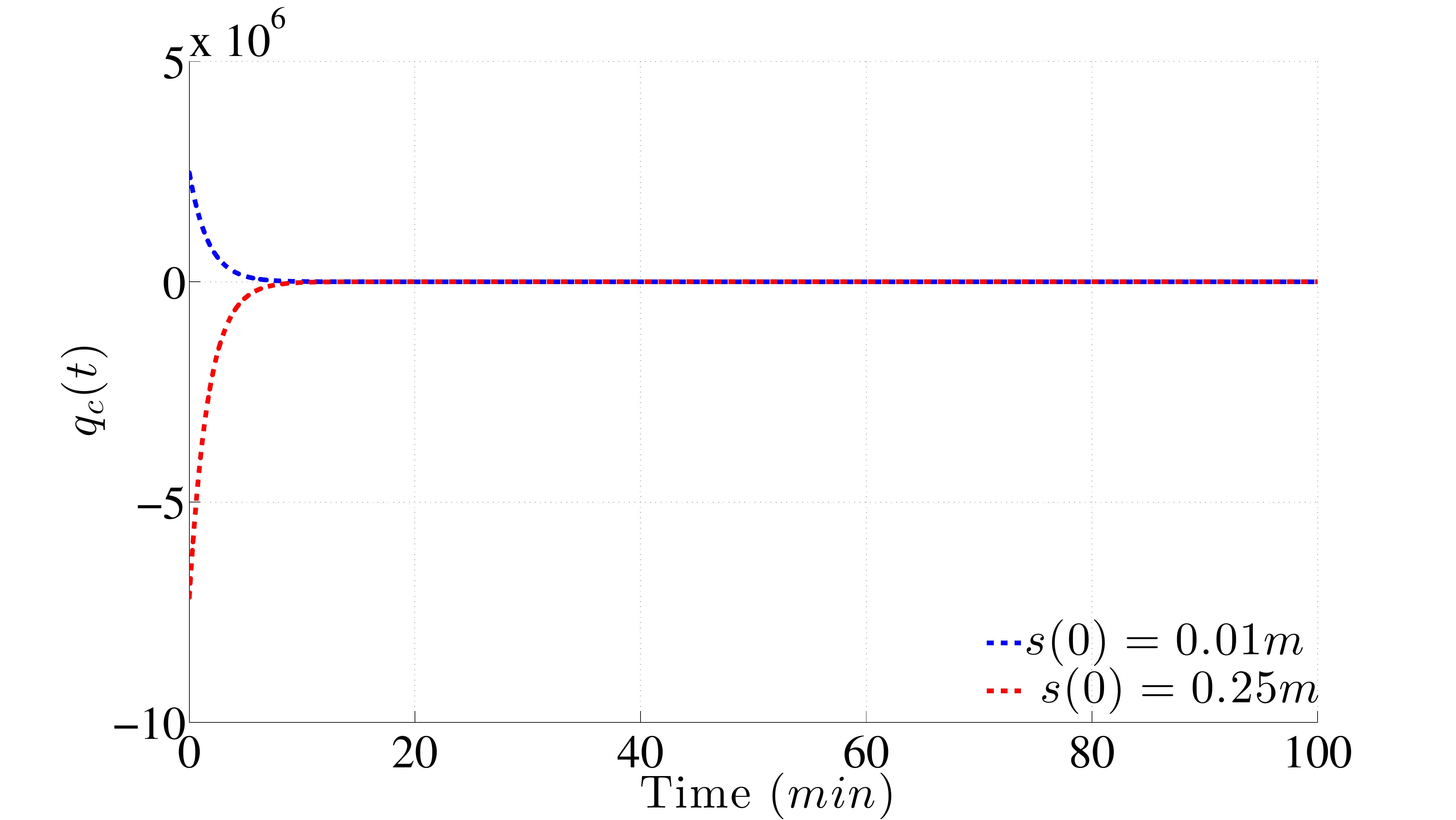}
\caption{The positiveness verification of the controller.}
\label{fig:control}
\end{figure}

\section{Conclusions and Future works}\label{conclusion}
Along this paper we studied  a one-phase Stefan problem in 1-D and proposed a boundary feedback controller that achieves the exponential stability of sum of the moving interface and the ${\cal H}_1$-norm of the temperature based on the full state measurement. A nonlinear backstepping transformation for moving boundary problem is utilized and the controller is proved to remain positive, which guarantees some physical properties required for the validity of the model and the proof of stability. There are two contributions stated in our conclusion. Firstly, our approach offers an interesting perspective regarding the backstepping control of moving boundary problem whose dynamics depends on the system. Secondly, we showed the exponential stability of the sum of the interface error and the ${\cal H}_1$-norm of temperature error, while in \cite{maidi2014} and \cite{petrus2010} it was shown asymptotical stability of interface error and exponential stability of ${\cal L}_2$-norm of temperature error.  Adding a dissipative term in the transformation would allow faster convergence and must be considered as a future direction. The design of an observer that enables to reconstruct the full state based on the boundary measurement is of practical interest in such type of problem and will be considered in our future work.

\bibliographystyle{unsrt}
\bibliography{ref}

\end{document}